\def\standard{\oddsidemargin=0in
              \evensidemargin=0in
              \topmargin =-.6in
              \textheight=9.4in
              \textwidth=6.5in}
\def\b1{{\bf 1}}
\def\blot{\quad {$\vcenter{\vbox{\hrule height.4pt
             \hbox{\vrule width.4pt height.9ex \kern.9ex \vrule
width.4pt}
             \hrule height.4pt}}$}}
\def\BState{\State\hskip-\ALG@thistlm}
\theoremstyle{thmstyletwo}%
\theoremstyle{thmstylethree}%
\newtheorem{theorem}{Theorem}
\newtheorem{lemma}{Lemma}
\newtheorem{remark}{Remark}
\theoremstyle{remark}
\theoremstyle{definition}
\begin{document}

\title{Non-linear Multi-objective Optimization with Probabilistic Branch and Bound}

\author{Hao Huang \thanks{Corresponding author.}\\
{\footnotesize Department of Industrial
Engineering and Management,
Yuan Ze University}  \\
{\footnotesize No.135, Yuandong Road, Taoyuan City, Taiwan}\\
{\footnotesize haohuang@saturn.yzu.edu.tw}\\
Zelda B. Zabinsky  \\
{\footnotesize Department of Industrial and Systems Engineering,
University of Washington}  \\
{\footnotesize 3900 E Stevens Way NE, Seattle, WA, USA}\\
{\footnotesize zelda@uw.edu}\\
}

\maketitle

\begin{abstract}
\noindent
A multiple objective simulation optimization algorithm named Multiple Objective Probabilistic Branch and Bound with Single Observation (MOPBnB(so)) is presented for approximating the Pareto optimal set and the associated efficient frontier for stochastic multi-objective optimization problems. MOPBnB(so) evaluates a noisy function exactly once at any solution and uses neighboring solutions to estimate the objective functions, in contrast to a variant that uses multiple replications at a solution to estimate the objective functions.  A finite-time performance analysis for deterministic multi-objective problems provides a bound on the probability that MOPBnB(so) captures the Pareto optimal set.   Asymptotic convergence of MOPBnB(so) on stochastic problems is derived, in that the algorithm captures the Pareto optimal set and the estimations converge to the true objective function values.
Numerical results reveal that the variant with multiple replications is extremely intensive in terms of computational resources compared to MOPBnB(so).  In addition, numerical results show that MOPBnB(so) outperforms a genetic algorithm NSGA-II on test problems.

\noindent\textbf{Keywords:} global optimization; multiple objectives; branch and bound;  stochastic optimization; estimation

\end{abstract}




\maketitle
\section{Introduction}
\label{sec:intro}
Multiple objectives generally exist for practical problems, and providing solutions to multi-objective problems is more challenging than for single objective problems 
\cite{miettinen2012nonlinear}.
A majority of multi-objective studies consider uncertainties in the objective functions \cite{hunter2019introduction}, however, some include uncertainties within decision variables, e.g.,  
\shortciteA{eichfelder2017decision,eichfelder2020algorithmic}.
Simulation is a powerful tool that can easily evaluate multiple performance measures of a system design.
Therefore, simulation optimization algorithms have been developed for optimizing complex multi-objective problems \cite{yoon2020multi}, although most simulation optimization algorithms focus on single objective problems \cite{Fu02,fu2015handbook}.

Many approaches for
multi-objective programming problems assume deterministic linear or convex objective functions with continuous variables, e.g.,  \citeA{Deb01,ehrgott2002multiple}.
More recently, multi-objective programming with linear objective functions and binary/discrete variables are explored, as in \shortciteA{boland2019preprocessing}, \shortciteA{feliot2017bayesian}, \shortciteA{figueira2017easy} and 
\shortciteA{liu2020multi}.
In  simulation optimization problems, objective functions are commonly allowed to be a black-box, which may be non-linear and non-convex. 
Various methodologies have been developed, such as kriging-based algorithms \cite{rojas2020survey} and ranking and selection algorithms \shortcite{li2017optimal}. 

Kriging-based algorithms have primarily been applied to deterministic multi-objective problems. However, recent advancements have extended their application to stochastic multi-objective problems, with a particular emphasis on discretized continuous domains \shortcite{rojas2020survey}. Some of these approaches are also designed to operate in continuous domain spaces \shortcite{horn2017first}. \citeA{rojas2020survey} highlighted that kriging-based algorithms in stochastic settings often integrate various criteria for sampling strategies and noise handling techniques within a general model-based sequential framework. For example, \shortciteA{koch2015efficient} employed a surrogate-assisted Efficient Global Optimization approach, utilizing replication or re-interpolation methods for noise handling. Furthermore, \shortciteA{gonzalez2020multiobjective} advanced this approach by incorporating a ranking and selection method with stochastic kriging to optimize the use of computational resources.

Multi-objective optimal computing budget allocation (MOCBA) has been developed for stochastic multi-objective problems with a limited number of finite designs \shortcite{MOO04OCBA,MOO10OCBA}.
MOCBA derives asymptotic analysis on solution quality, ensuring the probability of correct selections. 
Using the exploration capability of partition-based algorithms, \shortciteA{chew2009differentiated} combines a partition-based algorithm, nested partitions, with MOCBA techniques to extend the capability to problems with more system designs.
\shortciteA{feldman2018score} introduced a method that utilizes rate estimators for bivariate normal objectives. \shortciteA{choi2018pareto} developed a sequential approach to enhance the robustness and manage the complexity inherent in multi-objective ranking and selection methods. 
Additionally, \citeA{andradottir2021pareto} proposed a Pareto set selection procedure that provides guarantees based on the probability of correct selection.
For integer lattices domain space, \citeA{applegate2019r} use a gradient-based method to search the Pareto optimal set.
\shortciteA{li2015mo} propose Multi-Objective Convergent Optimization via Most-Promising-Area Stochastic Search (MO-COMPASS), which is developed for finite and constrained simulation optimization problems.
MO-COMPASS considers Pareto optimality and the asymptotic probability of correct selection, and it has been applied to optimize a hospital inpatient flow process \cite{wang2015multi}.
In addition to these primary approaches used in multi-objective simulation optimization, \shortciteA{kim2011sample} extended the sample average approximation with a trust-region method, originally designed for single-objective simulation optimization, to address multi-objective simulation optimization problems.

Meta-heuristic algorithms for multi-objective deterministic problems with continuous variables include evolutionary algorithms \shortcite{sarker2002new}, genetic algorithms \shortcite{deb2002fast},  particle swarm \cite{yang2011multi}, interacting-particle \cite{MeteZabinsky2014}, and weighted optimization by building deterministic hierarchical bandits \cite{al2019revisiting}.
Within these algorithms, the multiple objective genetic algorithm, NSGA-II \cite{deb2002fast}, is one of the most widely used multi-objective algorithms \shortcite{yusoff2011overview,verma2021comprehensive}.
Although meta-heuristic algorithms are empirically effective, they do not provide statistical analysis to support decision-making, where a common approach in meta-heuristic algorithms is to replicate simulations with different random seeds and average the function values at a design point.

Multi-Objective Probabilistic Branch and Bound with Replications (MOPBnB(wr))\cite{Huang14}, an extension of the single objective version that approximates a level set \cite{Hu13:WinSim,zabinsky2020partition}, is a partition-based algorithm and provides an approach for multi-objective continuous and mixed-integer simulation optimization problems. 
It is different from most partition-based algorithms that provide an asymptotic analysis of convergence, because MOPBnB (wr) and its predecessors provide a finite-time analysis on solution quality and is capable of solving problems with both continuous and integer variables. Whereas the classifications of the subregions in MOPBnB(wr) are permanent, 
\citeA{pedrielli2023part} proposed a single objective level set approximation partition-based algorithm combined with Gaussian process technique, named Part-X, which introduced reclassification of subregions.
\citeA{huang2024multiple} applied a version of MOPBnB (wr) to an emergency department resource allocation problem.

The common use of replication for noise handling in various stochastic multiple objective algorithms may
require a significantly large number of replications for a noisy simulation model to provide an accurate estimation of the full Pareto-optimal set and efficient frontier. A new approach to estimating the function value at a design point with only one replication was introduced in \shortciteA{kiatsupaibul2018single} for continuous variables, and adapted in \shortciteA{FiniteSOSA} for both discrete and continuous variables.  The idea is  to use single observations instead of running multiple replications at a design point, and to estimate the function evaluation at a point by averaging the function values at neighboring designs.  The method slowly shrinks the neighborhood while accumulating more neighboring designs, thus ensuring asymptotic convergence.


Hence, we present an algorithm, called Multi-Objective Probabilistic Branch and Bound with Single Observation (MOPBnB(so)), for Pareto-optimal set approximation allowing mixed continuous and integer variables and noisy function evaluations.  
There are two main differences between MOPBnB(wr) and MOPBnB(so): how they estimate the function value at a point; and the classification of subregions.
MOPBnB(wr) uses multiple replications at a design point and estimates each objective function value with a sample average as in \citeA{Huang14}, whereas MOPBnB(so) follows a more flexible sampling scheme that incorporates  the single observation technique \cite{kiatsupaibul2018single, FiniteSOSA}. 
Secondly, MOPBnB(so) allows reclassification of pruned subregions,  inspired by \citeA{pedrielli2023part}, whereas the pruning in MOPBnB(wr) is permanent.

In Section \ref{sec2}, we describe the details of MOPBnB(so) and highlight the incorporation of the single observation approach for estimation. Section \ref{sec3} discusses the analytical performance of MOPBnB(so), and Section \ref{sec4} demonstrates the numerical performance with test functions.  The conclusion is in Section \ref{sec5}. 

\section{Multiple Objective Probabilistic Branch and Bound}\label{sec2}

The multiple objective simulation optimization problem considered in this study is defined in the following form:
\begin{equation}\label{prob_def}
(\mathcal{P})\text{ } \min_{x\in S}  f_1(x), \ldots,f_m(x)
\end{equation}
where the $m$ objective functions can not be evaluated directly but must be estimated, that is, $f_{\ell}(x)=E_{\Xi}[g_{\ell}(x,\xi_x)], \ell=1,\ldots,m$ and $g_{\ell}(x,\xi_x)$ is a noisy function, typically from a simulation, and $\xi_x$ is the noise term. 
%
%
The domain $S$ has $n$ dimensions, with $n_1$ real-valued variables and $n_2$ integer-valued variables, where $n=n_1+n_2$.  Assume $S$ is closed and bounded, and $S \subset R^{n_1} \times Z^{n_2}$. We assume that a simulation, or oracle call, for a point $x$ can return whether $x$ is feasible (i.e., $x \in S$).

The goal of the algorithm is to approximate the Pareto optimal set,
that is, the set of non-dominated solutions, 
which can be defined as follows.
%
A point $x \in S$ is Pareto optimal if there does not exist another point $y \in S$ such that $f_{\ell}(y) \leq f_{\ell}(x)$ for all $\ell=1, \ldots, m$ and $f_j(y) < f_j(x)$ for at least one $j\in \{1, \ldots, m\}$. We let $x^*$ be a Pareto optimal point.

We present  MOPBnB(so) to approximate the Pareto optimal set.
The details of MOPBnB(so) are shown in the pseudo code in Algorithm 1.



The initialization in \emph{Step 0} sets up user-defined parameters and the initial partition of the solution space. 
MOPBnB(so) inputs the user-defined parameters $\delta$, $\alpha$, $B$, $r_k$, $n_k$, and $c$.
The two parameters $\delta$ and $\alpha$ are used in the performance analysis and are not necessary for computation.  They are included in the pseudo code for clarity.
The parameter $\delta$, $0\leq\delta\leq 1$, represents the closeness of the approximated Pareto set to the true efficient frontier, and $\alpha$,  $0 < \alpha< 1$,  is used to form a statistical quality of solution in terms of $(1-\alpha)$. 
The parameter $B\geq 2$ defines the partitioning scheme for each current subregion at the end of each iteration. 
We keep a list of the current subregions on the $k$th iteration in $\Sigma_k$, and let  $\sigma_i$ denote a subregion. Similarly, we keep track of pruned subregions, while allowing reclassification of subregions.
%
MOPBnB(so)  also needs to initialize three parameters that impact the shrinking of the neighborhood for single observation estimation: the parameter $r_k$ is the radius of the ball defining the neighborhood 
on iteration $k$ and decreases each iteration, i.e., $r_{k+1}<r_k,\ \forall k$; the parameter
$n_k$ represents the number of points to sample in current subregions on iteration $k$; and $c$ is a small number of sample points to take in pruned subregions needed to ensure estimation in the neighborhood.

MOPBnB(so) uniformly samples in subregions in \emph{Step 1}. Besides sampling in the non-pruned current subregions with $n_k$ points, MOPBnB(so) also samples in pruned subregions with a sample size of $kc$ on iteration $k$, so that the number of samples in the neighborhood are large enough to achieve a solution quality. The number of samples in each neighborhood for estimation may overlap with pruned subregions, especially for points close to the boundary of a subregion, so it is possible to sample a small number of points in the pruned subregions and a larger number of samples in the non-pruned current subregions. In the implementation, a small number of samples, $kc$, is used in the pruned subregions, and a suggested value of $\lceil\frac{\ln{\alpha_k}}{\ln{(1-\delta})}\rceil$ is used for $n_k$ in the non-pruned current subregions.


In \emph{Step 2}, MOPBnB(so) introduces the single observation technique for estimating each objective metric.
If the sample is passed from the previous iteration, the function value estimation can be updated from previous estimation as  

\begin{align}
\hat{f}_{\ell}(x_{i,j})&=\frac{\sum_{\{r:x_{i,r}\in B(x_{i,j},r_k)\} }y^\ell_{i,r}}{\vert\{r:x_{i,r}\in B(x_{i,j},r_k)\}\vert} \nonumber
\end{align}

%
\noindent
for $\ell=1, \ldots, m$, where $y^\ell_{i,r}$ is the observed value at $x_{i,r}$, and  $\vert \{ r:x_{i,r} \in B(x_{i,j},r_k) \} \vert$ is the number of samples in the ball around $x_{i,j}$.

In \emph{Step 3}, MOPBnB(so)  uses an existing algorithm in 
\shortciteA{MOKung75onfinding} to identify non-dominated samples based on the estimated performance metrics.

\emph{Step 4} classifies each subregion based on the observation of non-dominated samples. If a subregion contains only dominated samples, the subregion is pruned, whereas subregions with at least one non-dominated sample will be branched into $B$ new subregions.  MOPBnB(so) allows reclassification of pruned subregions since there is a small number of samples implemented in the pruned subregions for single observation estimation purposes.  This may result in reclassifying a subregion to a non-pruned status, with subsequent branching.

\emph{Step 5} checks the stopping condition, which is easily tailored.  The subregions that have not been pruned upon termination provide the approximate Pareto optimal set. In the numerical experiments we stop after a fixed number of iterations.  It is also possible to consider the size of current subregions, and stop when all subregions are either classified as pruned or reach a small enough size to satisfy the user's preferences.





\begin{algorithm}
	\caption{Multi-Objective Probabilistic Branch and Bound with Single Observation (MOPBnB(so))}\label{MOPBnB1}
\begin{algorithmic}
\Procedure{}{}
\BState \emph{Step 0. Initialize: Set user-defined parameters and the initial partition of the solution space.}
\State Input $\delta, \alpha$, $B\geq 2$, $r_{1},\ldots,r_{k},\ldots$,  $n_{1},\ldots,n_{k},\ldots$, and $c$..
	
\State Partition $S$ into $B$ subregions, $\sigma_{1}, \ldots, \sigma_{B}$, 
\State set $\Sigma_1\gets\{\sigma_{1}, \ldots, \sigma_{B} \}$, $\Sigma^P_1\gets\emptyset$, $\alpha_1\gets \alpha/B$, and $k \gets 1$. 

\BState \emph{Step 1. Sample: Uniformly sample additional points in each subregion.} 
\For {each subregion $\sigma_i$ in $\Sigma_k$ and $\Sigma^P_k$, $i=1,\ldots, \vert\Sigma_k\vert+\vert\Sigma^P_k\vert$}

\State uniformly sample additional points
\State such that the total number of points in $\sigma_i$ is
\begin{align}\label{eq_sample2_ball}
	N_{k,i} \gets \left\{\begin{array}{l}
		n_k , i=1, \ldots, \vert\Sigma_k\vert\\
		kc, i=\vert\Sigma_k\vert+1,\ldots, \vert\Sigma_k\vert+\vert\Sigma^P_k\vert
	\end{array} \right.
\end{align}
\EndFor

\State Denote the sample points in $\sigma_i$ by $x_{i,j}$, for $i=1,\ldots, \vert\Sigma_k\vert+\vert\Sigma^P_k\vert$, 
\State and for  $j=1,\ldots, N_{k,i}$.\\
%
\BState \emph{Step 2 Function Value Estimation:}
	\State Evaluate all new samples with a single replication for all objectives as 
	\State $y^l_{i,j}\gets g_{\ell}(x_{i,j},\xi_{x_{i,j}}), \forall i,j,l$.
	\State For every sample, estimate the function value with samples within radius $r_k$.
	\For {each $x_{i,j}\in \sigma_i$, $j=1,\ldots, N_{k,i} $ and $i=1,\ldots, \vert\Sigma_k\vert+\vert\Sigma^P_k\vert$} 
	
	\begin{equation}\label{}
		\hat{f}_{\ell}(x_{i,j})\gets\frac{\sum_{\{r:x_{i,r}\in B(x_{i,j},r_k)\} }y^l_{i,r}}{\lvert\{r:x_{i,r}\in B(x_{i,j},r_k)\}\rvert}, \ell=1, \ldots, m.
	\end{equation}
	\EndFor 
\EndProcedure{\ continued\ next\ page}
\end{algorithmic}
\end{algorithm}

\setcounter{algorithm}{0}
\begin{algorithm}
	\caption{MOPBnB Single Observation (so) continued}\label{MOPBnB3}
	\begin{algorithmic}
	\Procedure{continued}{}
	\BState \emph{Step 3. Identify current non-dominated set: Implement an existing comparison algorithm,  $\mathcal{A}$,
		to find the non-dominated sample points using $\hat{f}_{\ell}(x_{i,j})$ and update $\mathcal{S}_k$ with all non-dominated points and their estimated function values $\hat{f}_{\ell}$.}
	\State $\mathcal{S}_k\gets  \mathcal{A}\left(\{\hat{f}_{\ell}(x_{i,j})\vert \ell=1, \ldots, m,i=1, \ldots, \vert\Sigma_k\vert+\lvert\Sigma^P_k\rvert,j=1, \ldots, N_{k,i}\}\right)$\\
	
	\BState \emph{Step 4. Update: Prune and branch each subregion.}
	\For {each $i$, $i=1,\ldots,\vert\Sigma_{k}\vert+\vert\Sigma^P_{k}\vert$}
	\State Update the pruning indicator  $\emph{P}_i$
	\begin{equation}\label{eqUpP}
		\emph{P}_i\gets\left\{
		\begin{array}{l}
			1, \text{ if  } \sigma_i \cap \mathcal{S}_k = \phi \nonumber.\\
			0, \text{ otherwise} \nonumber 
		\end{array}
		\right.
	\end{equation}
	
	\State set $\alpha_{k+1} \gets \frac{\alpha_{k}}{B}$. 
	
	\If {$\emph{P}_i = 0$ and $\sigma_i$ is branchable} 
	\State partition $\sigma_i$ into $B$ subregions $\bar{\sigma}_{i}^1,\ldots, \bar{\sigma}_{i}^B$ 
	\State update the current set of subregions,
	
	\begin{align}\label{eqUp3}
		&\Sigma_{k+1}\gets\{\bar{\sigma}_i^j : \emph{P}_i = 0, j=1,\ldots,B \} \nonumber
	\end{align}
	
	\ElsIf {$\emph{P}_i = 1$} store the pruned subregions
	\begin{align} 
		&\Sigma^P_{k+1}\gets\{ \Sigma^P_{k}, \bar{\sigma}_i : \emph{P}_i = 1, \} \nonumber
	\end{align}
	\EndIf
	\EndFor
	
	\BState \emph{Step 5. Stopping Condition}:
	\If {all subregions $\sigma_i \in \Sigma_{k+1}$ are small enough to be considered no longer branchable,} terminate the algorithm and output $\Sigma_{k+1}$.
	\Else {  $k\gets k+1$} \textbf{goto} \bf{Step 1}
	\EndIf
	\EndProcedure
\end{algorithmic}
\end{algorithm}

\section{Performance Analysis}\label{sec3}

The performance analysis of MOPBnB consists of two parts.
First, we consider the case when objective functions are deterministic, that is, they can be evaluated with no noise.  In this case, the analysis of MOPBnB(so) follows the analysis of MOPBnB(wr), since no estimation is needed.
Under the no-noise condition, Theorem~\ref{MOPBnBTh0} provides a finite time analysis  on discovering high-quality solutions in the approximation of the Pareto optimal set.

Second, we allow noise in the objective functions, and so an estimation of the objective function is performed.  A finite time performance analysis of MOPBnB(wr) is provided in Theorem~\ref{MOPBnBTh1}, that relies on choosing a
conservative sample size and replication size, iteratively.
The convergence analysis of 
MOPBnB(so),  provided in Theorems~\ref{Theo3} and~\ref{SOMOPBnBTh1}, relies on sampling sufficiently often in shrinking neighborhoods of the single observation technique
to ensure the estimation converges to the true function value for points in the approximation of the Pareto optimal set.

\citeA{Huang14} introduced a performance measure regarding the quality of a Pareto optimal set approximation, which is  considered here for both MOPBnB(wr) and MOPBnB(so) in the performance analysis. 
For each solution $x\in S$, a ``distance" metric $D(x)$ representing the minimum distance  to the true efficient frontier, is used to represent the quality of the solution.
Specifically, it is defined as
\begin{equation}\label{D_def}
	D(x) = \inf_{x^* \in S_E} \left\{  \sqrt{\sum_{\ell=1}^m(f_\ell(x) - f_\ell(x^*))^2} \right\} \end{equation}

	\noindent
	where $S_E$ is the true Pareto optimal set. 
	Although $D(x)$ cannot be evaluated when implementing the algorithm because the true Pareto optimal set is unknown, it  can be used in the performance analysis to reflect the quality of each solution.
	We define a relaxation of the Pareto optimal set by considering high-quality solutions that are in the top  $100\delta\%$ of solutions defined with $D(x)$.  For analysis purposes, we consider  $y(\delta, S)$ and $L(\delta,S)$, where
	$L(\delta,S)$ is the set of high-quality solutions with distance $D(x)$ no more than $y(\delta, S)$.
	Specifically, $y(\delta, S)$ and $L(\delta,S)$ are defined as follows,
	\begin{align}\label{eq0_0_0}
		y(\delta, S) &=\mathop{\arg\min}\limits_{y\in \{D(x):x \in S\}} \{P(D(X)\leq y )\geq \delta \},\text{ for } 0<\delta <1,
	\end{align}
	for $X$ uniformly distributed on $S$, and
	\begin{align}\label{eq0}
		L(\delta, S)=\{ x\in S: D(x)\leq y(\delta, S) \},\text{ for } 0<\delta <1.
	\end{align}
	Theorems \ref{MOPBnBTh0}, \ref{MOPBnBTh1}, and Theorem \ref{SOMOPBnBTh1} use $L(\delta, S)$ to represent the desirable set of high-quality solutions as a relaxation of the Pareto optimal set. To relate the high-quality solutions to the objective functions, we make the following remark.

		%
		%
		\begin{remark}\label{cond1}
			There exists an $\epsilon>0$, such that $\sqrt{m\epsilon^2}=y(\delta,S)$. For all $x$ such that $\vert f_{\ell}(x)-f_{\ell}(x^*)\vert <\epsilon$,  for
			some $x^* \in S_E$, $x\in L(\delta,S)$.
		\end{remark}
		\noindent
		The solution set $\left\{x\in S, {\rm \ with\ } \vert f_{\ell}(x)-f_{\ell}(x^*)\vert <\epsilon \right\}$ is a subset of $L(\delta,S)$, allowing us to develop
		a bound on the objective functions in the performance analysis.

		%
		%

		Theorem \ref{MOPBnBTh0} for deterministic problems shows that the current unpruned subregions for both MOPBnB(so) and MOPBnB(wr) contain some high-quality solutions with probability greater than or equal to $1-\alpha.$  
		In the analysis and in the numerical implementation of MOPBnB(so) on deterministic problems, we set
		\begin{align}\label{eq_sample_deterministic}
			N_{k,i} \gets \left\{\begin{array}{l}
				\bigl\lceil\frac{\ln{\alpha_k}}{\ln{(1-\delta})}\bigr\rceil, i=1, \ldots, \vert\Sigma_k\vert\\
				kc, i=\vert\Sigma_k\vert+1,\ldots, \vert\Sigma_k\vert+\vert\Sigma^P_k\vert
			\end{array} \right.
		\end{align}
		in Step~1.
		The sampling and branching structure, ignoring the performance estimation steps, of MOPBnB(so) is similar to MOPBnB(wr). However, MOPBnB(so) samples in the pruned regions, which increases the probability of sampling in $L(\delta, S)$.
		The conservative bound of Theorem \ref{MOPBnBTh0} still holds for MOPBnB(so).

		\begin{theorem}\label{MOPBnBTh0}
			(c.f. \citeA{Huang14}) Consider the $K^{th}$ iteration of MOPBnB(so) or MOPBnB(wr) on $(\mathcal{P})$ with deterministic objective functions.
			The intersection of the approximated set and $L(\delta,S)$ is non-empty with
			\begin{equation}\label{coro1_0}
				P \left ( \left ( S\setminus \bigcup_{k=1}^K \sigma_p^k\right )\bigcap L(\delta,S) \neq \phi \right )\geq (1-\alpha).
			\end{equation} 
		\end{theorem}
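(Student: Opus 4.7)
The plan is a union-bound argument over iterations and branches, built on two ingredients: the defining property that a point drawn uniformly from $S$ lies in $L(\delta,S)$ with probability at least $\delta$, and the per-iteration sample budget $n_k=\lceil \ln\alpha_k/\ln(1-\delta)\rceil$ with $\alpha_k=\alpha/B^k$, which makes $(1-\delta)^{n_k}\le\alpha_k$.

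First I would establish a per-subregion, per-iteration guarantee. Along any path in the branching tree whose successive subregions each contain at least a $\delta$-fraction of $L(\delta,S)$ relative to their own volume (which holds at the root by the definition of $y(\delta,S)$ and is preserved by selecting the ``good'' child at each branch step), the probability that all $n_k$ uniform samples in the subregion miss $L(\delta,S)$ at iteration $k$ is at most $\alpha_k$. In the deterministic setting, if even one sample $x$ in such a subregion lies in $L(\delta,S)$, then either $x$ is non-dominated (so its subregion is not pruned) or $x$ is dominated by another sample $y$; since dominating $x$ in objective space means $y$ is at least weakly closer to the Pareto frontier, $y$ lies in a subregion that also intersects $L(\delta,S)$ and survives, so the non-pruned union still intersects $L(\delta,S)$.

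Second I would aggregate along the branching tree. The recursive allocation $\alpha_{k+1}=\alpha_k/B$ splits the failure budget among the $B$ children of each branch point, so the per-path failure contributions form a geometric series whose total is bounded by $\alpha$; since only a single surviving path intersecting $L(\delta,S)$ is needed to conclude the theorem, this yields the $1-\alpha$ probability bound. This is essentially the telescoping argument of \citeA{Huang14} for MOPBnB(wr). For MOPBnB(so), the only difference in Step~1 is the extra $kc$ samples taken in already-pruned subregions; these can only cause reclassification that enlarges the non-pruned region, so the bound carries over without modification.

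I expect the main obstacle to be that, within a given iteration, the samples jointly determine the non-dominated set, so the pruning decisions across subregions are not independent. I would handle this by conditioning on survival through iteration $k-1$ and exploiting the observation above: a ``good'' subregion is pruned only when its samples in $L(\delta,S)$ are dominated by samples from other necessarily also-good subregions, which does not remove $L(\delta,S)$ from the non-pruned set. With this conditioning in place, the telescoping of $\alpha_k$ allocates the failure budget correctly across iterations $k=1,\ldots,K$, and the union bound delivers the stated guarantee.
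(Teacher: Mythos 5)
Your overall architecture --- the per-iteration bound $(1-\delta)^{n_k}\le\alpha_k$ on all samples missing $L(\delta,S)$ in a tracked subregion that retains a $\delta$-fraction of good volume by pigeonhole, followed by the geometric summation $\sum_{k\ge 1}\alpha/B^k\le\alpha$ for $B\ge 2$ --- is the standard PBnB argument that the cited result of \citeA{Huang14} rests on; note that the paper itself gives no proof beyond that citation, so this is the right skeleton to reconstruct.

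There is, however, a genuine gap at the step where you dispose of the case that a sampled point $x\in L(\delta,S)$ is dominated: you assert that a dominating sample $y$ is ``at least weakly closer to the Pareto frontier,'' i.e.\ $D(y)\le D(x)$, hence $y\in L(\delta,S)$ and $L(\delta,S)$ survives in $y$'s subregion. With $D$ defined as in (\ref{D_def}) this implication is false. Take $m=2$ with efficient-frontier images $(10,0)$ and $(0,0.01)$, $f(x)=(10,0.02)$ and $f(y)=(5,0.015)$. The configuration is consistent ($x$ is dominated by the first Pareto image, $y$ by the second, and the two Pareto images are mutually non-dominated), $y$ dominates $x$, yet $D(x)=0.02$ while $D(y)\approx 5$: the dominator sits in the ``gap'' of the frontier, far from every efficient image. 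So sampling a point of $L(\delta,S)$ does not by itself prevent that point's subregion from being pruned, and the chain of dominators need not terminate inside $L(\delta,S)$. Two further, smaller problems: even granting the domination step, when the ``path'' jumps to the dominator's subregion you lose the inductive invariant that the tracked subregion holds a $\delta$-fraction of $L(\delta,S)$, and a subregion that merely contains one sampled point of $L(\delta,S)$ at iteration $k$ can still be pruned at a later iteration, while your budget $\alpha_k$ only covers the event that all $n_k$ samples miss $L(\delta,S)$. Also, the extra $kc$ samples in pruned subregions do not ``only enlarge'' the non-pruned region: they enter the global non-dominated comparison in Step~3 and can newly dominate samples in current subregions, causing additional pruning. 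Closing the argument requires either a monotonicity property of $D$ under domination (which needs extra structure on the frontier) or the actual mechanism used in \citeA{Huang14}.
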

		\begin{proof} 
			See \citeA{Huang14}.
		\end{proof}

		With a proper noise assumption (See Appendix Assumption 1), Theorem~\ref{MOPBnBTh1} is a noisy version of Theorem~\ref{MOPBnBTh0}  for MOPBnB(wr).  Notice the impact of the number of objective functions $m$ on the probability bound.  This is due to the possible mis-ordering of each objective function value.
		
		

		\begin{theorem}\label{MOPBnBTh1}
			(c.f. \citeA{Huang14}) Consider the $K^{th}$ iteration of MOPBnB(wr) on $(\mathcal{P})$ given Assumption 1.
			The intersection of the approximating set and $L(\delta,S)$ is non-empty with
			\begin{equation}\label{coro1_0}
				P \left ( \left ( S\setminus \bigcup_{k=1}^K \sigma_p^k\right )\bigcap L(\delta,S) \neq \phi \right )\geq (1-\alpha)(1-m\alpha).
			\end{equation} 
		\end{theorem}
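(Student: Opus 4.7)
The plan is to decompose the event $\mathcal{E} := \{(S \setminus \bigcup_{k=1}^K \sigma_p^k) \cap L(\delta, S) \neq \emptyset\}$ into a sampling/branching event $\mathcal{E}_1$ and a noise-control event $\mathcal{E}_2$, bound each separately, and then combine them. Since the sampling and branching rules of MOPBnB(wr) coincide with those of the deterministic algorithm (only the evaluation mechanism differs), the bound $P(\mathcal{E}_1) \geq 1-\alpha$ transfers directly from Theorem \ref{MOPBnBTh0}: at the $K$th iteration at least one high-quality point has been sampled and its containing subregion would remain open, provided the noisy comparison step does not mistakenly prune it.

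For $\mathcal{E}_2$, I would condition on having sampled some $x \in L(\delta,S)$ and argue that $x$ is not spuriously pruned under noisy evaluation. By Remark \ref{cond1}, it suffices that the estimates satisfy $|\hat{f}_\ell(x) - f_\ell(x)| < \epsilon$ simultaneously for every $\ell = 1, \ldots, m$, where $\epsilon$ is chosen so that $\sqrt{m \epsilon^2} = y(\delta, S)$. Assumption 1 combined with the iteratively conservative replication size used by MOPBnB(wr) delivers a per-objective concentration bound of at least $1-\alpha$ on the sample-mean estimator; a union bound across the $m$ objectives then yields $P(\mathcal{E}_2 \mid \mathcal{E}_1) \geq 1 - m\alpha$. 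Since the uniform sampling in Step~1 is independent of the simulation noise in Step~2, we obtain $P(\mathcal{E}) \geq P(\mathcal{E}_1) \cdot P(\mathcal{E}_2 \mid \mathcal{E}_1) \geq (1-\alpha)(1-m\alpha)$, which is the claimed bound.

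The main obstacle is the careful translation between coordinate-wise accuracy and preservation of non-dominance among noisy estimates. One must verify that a simultaneous $\epsilon$-perturbation in every objective cannot flip the Pareto relation between $x \in L(\delta,S)$ and any other sample that would need to dominate it in estimate in order to trigger pruning; this is precisely what the distance $y(\delta, S)$ together with the choice $\sqrt{m\epsilon^2} = y(\delta,S)$ in Remark \ref{cond1} enforces, and the factor $m$ in the bound is the unavoidable cost of the union bound across the $m$ coordinates. A secondary point of care is that the iteration-level $\alpha_k$ budget used in Step~4 is already absorbed by Theorem \ref{MOPBnBTh0}, so Assumption 1 only has to supply the additional $(1 - m\alpha)$ factor at the comparison stage rather than an extra per-iteration correction.
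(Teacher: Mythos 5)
Your decomposition matches what the paper intends: the paper itself defers the proof to \citeA{Huang14} but explains that the extra factor $(1-m\alpha)$ ``is due to the possible mis-ordering of each objective function value,'' i.e., exactly your union bound over the $m$ objectives layered on top of the deterministic sampling/branching guarantee of Theorem~\ref{MOPBnBTh0}, so your two-event structure $P(\mathcal{E}_1)\,P(\mathcal{E}_2\mid\mathcal{E}_1)\geq(1-\alpha)(1-m\alpha)$ is essentially the paper's argument. The one point where your reasoning drifts is the closing claim that Assumption~1 ``only has to supply the additional $(1-m\alpha)$ factor at the comparison stage rather than an extra per-iteration correction'': pruning in MOPBnB(wr) is permanent and occurs at every iteration, so a mis-ordering at any $k<K$ can destroy the high-quality subregion, and the $(1-m\alpha)$ budget must be spread across all $K$ iterations --- this is precisely why the paper stresses that the replication size is chosen ``conservative\ldots iteratively.'' Your final bound survives because the per-iteration error probabilities are made to sum to $m\alpha$, but the accounting is cumulative over iterations, not confined to the final comparison.
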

		\begin{proof} 
			See \citeA{Huang14}.
		\end{proof}
		

		For MOPBnB(so), the performance analysis is derived considering a mixed integer-continuous setting,
		based on the results from \citeA{FiniteSOSA}.
Several assumptions are needed to ensure convergence.
Basically, the random error of the objective function has to be uniformly bounded, and the number of samples  $n_k$ need to increase each iteration to ensure the number of samples in the shrinking ball of radius $r_k$ is  large enough to 
provide a high quality estimation of each objective function with single observation. 
(See the appendix for Assumptions~2 -- 4, repeated  from \citeA{FiniteSOSA}). In Equation (\ref{eq_sample2_ball}), the sample size in the pruned subregions ($kc$) iteratively increases, satisfying Assumption~4.	As  long as the samples in the shrinking ball region increase in a certain manner as specified in the assumptions,  the estimated objective function asymptotically converges to the true objective function,  repeated here in Lemma~\ref{lemma1}.

		
%
%
%
%
		

		\begin{lemma}\label{lemma1}
			(c.f. \citeA{FiniteSOSA}) Given Assumptions 2, 3, and 4, the estimated performance $\hat{f}_{\ell}(x)$ at  $x\in  S$ for any $\ell \in \{1,\ldots,m\}$ converges to the true function $f_{\ell}(x)$ as iterations increase, $\hat{f}_{\ell}(x) \rightarrow f_{\ell}(x)$ with probability one.
		\end{lemma}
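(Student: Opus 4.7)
The plan is to use a standard bias-variance decomposition of the estimation error. For any fixed $x \in S$ and any $\ell$, write
\begin{equation*}
\hat{f}_{\ell}(x) - f_{\ell}(x) = \Bigl[\hat{f}_{\ell}(x) - \bar{f}_{\ell}^{(k)}(x)\Bigr] + \Bigl[\bar{f}_{\ell}^{(k)}(x) - f_{\ell}(x)\Bigr],
\end{equation*}
where $\bar{f}_{\ell}^{(k)}(x)$ denotes the average of the \emph{true} objective values $f_{\ell}(x_{i,r})$ taken over the neighbors $x_{i,r} \in B(x,r_k)$. The first bracket is a noise-averaging term (conditional on the neighbor locations it has mean zero), and the second bracket is a bias term arising from using neighbors rather than $x$ itself.

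For the bias term, I would invoke the continuity/Lipschitz-type regularity of $f_\ell$ that is built into Assumption 2 together with $r_k \downarrow 0$: since every neighbor is within distance $r_k$ of $x$, each $f_{\ell}(x_{i,r})$ is within a vanishing modulus of $f_{\ell}(x)$, so their average is as well. For the noise term, I would condition on the neighbor locations and use Assumption 2's uniform boundedness of the noise $\xi$ to apply a Hoeffding-type deviation inequality on the sample mean of independent bounded noises; combined with Borel–Cantelli over the iterations $k$, this gives almost-sure convergence provided the neighbor count $|\{r: x_{i,r}\in B(x,r_k)\}|$ tends to infinity.

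The main obstacle is establishing this last fact: the neighbor count must grow almost surely even though the ball itself is shrinking. This is where Assumptions 3 and 4 on the interplay between $r_k$, $n_k$, and $kc$ enter in an essential way. One has to argue that the expected number of uniform samples falling in $B(x,r_k)\cap S$ across the current and pruned subregions tends to infinity, and then upgrade this to an almost-sure statement via a second Borel–Cantelli argument using the independence of samples drawn on different iterations. The fact that pruned subregions continue to receive $kc$ samples (rather than being abandoned) is critical, because without these the ball around an $x$ near a pruned boundary could starve of neighbors.

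Once $|\mathcal{N}_k(x)| \to \infty$ almost surely is secured, combining it with the Hoeffding bound on the noise term and the continuity argument on the bias term yields $\hat{f}_\ell(x) \to f_\ell(x)$ almost surely, which is the claim. Since this is a direct restatement of a result from \citeA{FiniteSOSA} transcribed to the MOPBnB(so) setting (with each objective $\ell$ handled identically), the argument reduces to checking that the sampling scheme in Step~1 of Algorithm~1 satisfies the sampling-rate hypotheses used there; no new analytical machinery is needed beyond what is supplied in that paper.
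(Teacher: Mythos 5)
The paper does not actually prove this lemma: its ``proof'' is a one-line citation to \citeA{FiniteSOSA}, so there is no in-paper argument to compare yours against line by line. That said, your sketch lays out the standard architecture for shrinking-ball single-observation estimators and it is consistent with what the cited assumptions are designed to deliver: a bias term controlled by continuity of $f_\ell$ (Assumption 2) together with $r_k \downarrow 0$, a noise-averaging term controlled by a bounded-noise concentration inequality plus Borel--Cantelli, and a neighbor-count growth condition supplied by the sampling schedule. You also correctly identify the one point that is specific to MOPBnB(so) rather than generic: the $kc$ samples placed in pruned subregions are what prevent the ball around a point near a pruned boundary from starving, which is exactly the role the paper assigns to that design choice when it says the pruned-region sample size ``iteratively increases, satisfying Assumption~4.''

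Two caveats. First, you misattribute the assumptions: in this paper the uniform boundedness of the random error is Assumption 3, not Assumption 2 (Assumption 2 is continuity of the objectives), and the interplay of $r_k$ and $n_k$ is Assumption 4 alone. Second, your write-up is a plan rather than a proof: the step you yourself flag as ``the main obstacle'' --- showing the neighbor count $\vert\{r: x_{i,r}\in B(x,r_k)\}\vert \to \infty$ almost surely despite the shrinking radius --- is asserted as needing a ``second Borel--Cantelli argument'' but never carried out. That step is precisely the technical content of Assumption 4 and of the analysis in \citeA{FiniteSOSA}; since the lemma is explicitly imported from that reference, deferring to it is acceptable here, but as a standalone proof your argument is incomplete at exactly the point where the real work lives.
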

		\begin{proof} 
			See  \cite{FiniteSOSA}.
		\end{proof}

		We now derive an asymptotic performance analysis for MOPBnB(so) in Theorems~\ref{Theo3} and~\ref{SOMOPBnBTh1} using single observation for estimation.	
		Theorem \ref{Theo3} states that, if a subregion contains a part of the true Pareto optima set $S_E$, then that subregion will not be pruned, eventually.
		
		
		
		\begin{theorem}\label{Theo3}
			For any subregion that contains a part of the Pareto optima set $S_E$, 
			\begin{equation}\label{Theo2eq_0}
				P\left(\left.\lim_{k\rightarrow\infty} \sigma \in \Sigma_k    \right\vert \sigma \cap S_E\neq \emptyset  \right)=1.
			\end{equation}
		\end{theorem}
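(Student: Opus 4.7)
The plan is to combine the strong consistency of the single-observation estimator (Lemma~\ref{lemma1}) with the fact that MOPBnB(so) continues to place samples in every subregion at every iteration ($n_k$ points when active, $kc$ points when pruned), so that sample coverage around any point in $\sigma$ grows without bound.

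Fix $\sigma$ with $x^{*}\in \sigma \cap S_{E}$. Because samples keep arriving in $\sigma$ and the single-observation radius $r_k$ shrinks in the manner required by Assumptions~2--4, standard coverage arguments give, with probability one, a sequence of samples $\{x_{k}\}\subset \sigma$ whose distance to $x^{*}$ falls below $r_k$ for all sufficiently large $k$. By Lemma~\ref{lemma1}, $\hat{f}_{\ell}(x_{k}) \to f_{\ell}(x^{*})$ for every objective $\ell$, and simultaneously $\hat{f}_{\ell}(y) \to f_{\ell}(y)$ for every other sampled $y \in S$, all with probability one.

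The main step is to transfer Pareto-optimality of $x^{*}$ from the true objectives to the estimated objectives that drive the pruning rule in Step~4. Since $x^{*} \in S_{E}$, every $y \in S$ falls into one of two cases: either there is an $\ell$ with $f_{\ell}(y) > f_{\ell}(x^{*})$, or $f(y)$ coincides with $f(x^{*})$ in every coordinate. At any fixed finite iteration only finitely many $y$ have been sampled; for each $y$ of the strict type, convergence of the estimators yields $\hat{f}_{\ell}(y) > \hat{f}_{\ell}(x_{k})$ eventually, so $y$ cannot dominate the near-$x^{*}$ sample. Intersecting countably many probability-one events as the sample pool grows, we obtain: with probability one, for all sufficiently large $k$, $x_{k}$ is not strictly dominated in $\mathcal{S}_{k}$, so $P_{\sigma} = 0$ in Step~4 and $\sigma$ (or, interpreting the statement as survival of the descendant lineage, the child subregion containing $x^{*}$) persists in $\Sigma_{k+1}$. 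Iterating across the at-most-countable branching tree closes the induction.

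I expect the main obstacle to be the tie-breaking case in which some sampled $y \neq x^{*}$ satisfies $f(y) = f(x^{*})$: estimator noise can spuriously assign a strict dominance inequality against $x_k$ at a given iteration. Two observations resolve this. First, any such $y$ is itself Pareto optimal, so the symmetric argument keeps its subregion active and, if $y \in \sigma$, already delivers $\sigma \cap \mathcal{S}_k \neq \emptyset$. Second, by Lemma~\ref{lemma1} the event that noise persistently assigns the strict inequality against $x_k$ for infinitely many $k$ has probability zero, so eventually $x_k$ is declared non-dominated. Combined with the algorithm's requirement of strict improvement in at least one objective to trigger dominance, this rules out permanent mis-pruning and yields the probability-one limit stated in the theorem.
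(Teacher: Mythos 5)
Your argument is essentially the paper's: both rest on the fact that continued sampling in pruned subregions drives a sampled point toward the Pareto point in $\sigma$, on continuity (Assumption 2) to bound the resulting function-value gap $\rho$, and on Lemma~\ref{lemma1} to rule out the estimates mis-ordering that point against a dominating competitor infinitely often. The paper phrases this contrapositively, showing $P\left(\sigma \in \Sigma^P_k \text{ i.o.}\right)=0$ via a nested-events bound, whereas you argue directly that the near-Pareto sample is eventually declared non-dominated (and you additionally spell out the tie case), but the substance --- and the informality about the iteration-dependent pool of competitor points --- is the same.
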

		
		\begin{proof} 
			See Appendix.
		\end{proof}

		With confidence that the Pareto optimal set will not be pruned  asymptotically, we next address the relationship 
		the high-quality set $L(\delta,S)$ and the approximation $\Sigma_k$  by MOPBnB(so).
		Theorem \ref{SOMOPBnBTh1} establishes that each non-pruned subregion containing some part of the Pareto optimal set  is a subset of $L(\delta,S)$ asymptotically.
		
		
		
		
		
		\begin{theorem}\label{SOMOPBnBTh1}
			Given the Assumptions 2,3, and 4, for any subregion $\sigma \in \Sigma_k$ and $\sigma\cap S_E\neq\emptyset$, the probability that $\sigma$ is a subset of the high-quality set $L(\delta,S)$ converges to 1 as $k\rightarrow\infty$ , that is
			\begin{equation}\label{SOMOPBnBTh1eq1}
				P\left(\left.\lim_{k\rightarrow\infty} \sigma\subseteq L(\delta,S)\right\vert \sigma \in \Sigma_k, \sigma\cap S_E\neq\emptyset\right)=1.
			\end{equation}
		\end{theorem}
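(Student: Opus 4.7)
The plan is to combine Theorem \ref{Theo3} with the fact that each surviving subregion is repeatedly partitioned in Step~4, and then to invoke Remark \ref{cond1} through a continuity argument on the objective functions $f_\ell$.

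First, I would fix a Pareto optimal point $x^* \in \sigma \cap S_E$. Applying Theorem \ref{Theo3} recursively along the sequence of descendant subregions that contain $x^*$, with probability one there is always some non-pruned subregion in $\Sigma_k$ containing $x^*$ for all large $k$. Call this shrinking sequence of descendants $\sigma_{(k)}$. Because Step~4 partitions each non-pruned subregion into $B \geq 2$ pieces of strictly smaller diameter at every iteration, the diameter of $\sigma_{(k)}$ converges to zero deterministically on this probability-one event.

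Second, I would invoke the continuity of each $f_\ell$, which is implicit in the noise structure of Assumptions~2--4 governing single observation estimation. Taking the $\epsilon > 0$ provided by Remark \ref{cond1} with $\sqrt{m \epsilon^2} = y(\delta, S)$, continuity yields a radius $\rho > 0$ such that $\|x - x^*\| < \rho$ implies $|f_\ell(x) - f_\ell(x^*)| < \epsilon$ for all $\ell = 1, \ldots, m$. Because the diameter of $\sigma_{(k)}$ tends to zero, there exists a (random) iteration $K^*$ after which $\sigma_{(k)}$ is contained in the ball of radius $\rho$ centered at $x^*$, so every $x \in \sigma_{(k)}$ satisfies $|f_\ell(x) - f_\ell(x^*)| < \epsilon$ for each $\ell$. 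Remark \ref{cond1} then gives $\sigma_{(k)} \subseteq L(\delta, S)$ for all $k \geq K^*$. Since this deterministic inclusion holds on the probability-one event supplied by Theorem \ref{Theo3}, the conditional probability in (\ref{SOMOPBnBTh1eq1}) equals one.

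The main obstacle will be formalizing the recursive use of Theorem \ref{Theo3}: the theorem is stated for a fixed subregion intersecting $S_E$, but after branching only one descendant retains $x^*$. That descendant itself still intersects $S_E$ (it contains $x^*$), so Theorem \ref{Theo3} applies iteration-by-iteration, and a countable intersection of probability-one events remains probability one. A secondary technical point is to verify that the branching scheme drives diameters to zero, which is a mild property of any standard equal-volume partition rule for MOPBnB(so) and may need to be stated explicitly. Note that Lemma \ref{lemma1} is not needed directly in this argument, since $L(\delta, S)$ is defined through the \emph{true} objectives $f_\ell$ rather than their estimates; Lemma \ref{lemma1} enters only implicitly via its role inside the proof of Theorem \ref{Theo3}.
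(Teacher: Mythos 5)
Your proposal is correct and follows essentially the same route as the paper's proof: both arguments reduce the claim to showing that, via Remark \ref{cond1}, membership in $L(\delta,S)$ follows once every $x$ in the surviving subregion satisfies $\vert f_\ell(x)-f_\ell(x^*)\vert<\epsilon$ for a Pareto point $x^*\in\sigma$, and both obtain this from continuity of the objectives together with the fact that iterative branching shrinks the non-pruned subregion below the relevant scale. Your version is somewhat more explicit than the paper's (notably in invoking Theorem \ref{Theo3} for survival of the descendant chain and in flagging that the partition rule must drive diameters to zero, an assumption the paper also uses but leaves implicit), but it is not a different argument.
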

		\begin{proof}
			See Appendix.
		\end{proof}
		
\section{Numerical Experiments}\label{sec4}

In this section, MOPBnB(so)  is numerically compared to MOPBnB(wr) and an extended Non-dominated Sorting Genetic Algorithm II (NSGA-II)  on test functions from the literature. A uniform random search is also considered to provide baseline results. 
NSGA-II is a ``fast sorting and elite multi-objective genetic algorithm" introduced by \citeA{deb2002fast} and widely used for multi-objective problems \cite{yusoff2011overview}. 
The algorithms are executed in MATLAB with  the optimization toolbox.



The algorithms are executed on four test functions:  ZDT1, ZDT2, ZDT3 \cite{Deb01} and the Fonseca and Fleming (FF) function \cite{fonseca1993genetic}.  The efficient frontier for ZDT1 is convex, where ZDT2 and ZDT3 have non-convex and discontinuous efficient frontiers. The
FF function demonstrates a case where the Pareto optimal set is located differently in the domain space.
The test functions are included in the appendix.

These test functions are designed as deterministic functions for multiple objective algorithms. However, we focus on objective functions with noise.
Hence, for the numerical experiments, the objective functions of ZDT1, ZDT2, ZDT3, and FF, $f^0_1(x)$ and $ f^0_2(x)$, are used as deterministic components for a stochastic objective function as follows,
\begin{align}
	&f_{\ell}(x)=E_{\Xi}[g_{\ell}(x,\xi_x)] \nonumber \\
	&g_{\ell}(x,\xi_x) = f^0_{\ell}(x)(1 + \xi_x), \nonumber
\end{align}
where $\xi_x\sim N(0,0.1)$ and ${\ell}=1,2$ in the experiments.


In the numerical experiments, MOPBnB(so) and MOPBnB(wr) use the following parameter values: $\delta=0.1, \alpha=0.1$, and $B = 2$. To initialize replications in MOPBnB(wr), we use $R_1=10$.  Also, to avoid running too many replications, an upper bound on the number of replications for each point is set to be $1,000$.
MOPBnB(so) uses $c=50$. The ball shrinks in the following manner. The ball radius begins with 10\% of the range of test functions' variables and shrinks iteratively incorporating the number of input variables.
The higher the dimensionality $n$ of the test function, the slower the radius shrinks.  The ball radius  on the $k$th iteration is
\begin{equation}
	r_k=\frac{r_0}{B^{k/n}}, \text{\ \  where } r_0=0.1. \nonumber
\end{equation}


Figures \ref{fig:zdt122ite11fMO} and \ref{fig:zdt122ite11xMO} demonstrate a near perfect approximation of the efficient frontier in the objective space and the Pareto optimal set by MOPBnB(so) and MOPBnB(wr) in the two-dimensional domain for FF and ZDT1, respectively. 
In Figure~\ref{fig:zdt122ite11fMO}(a)
and Figure~\ref{fig:zdt122ite11xMO}(a), the
blue stars represent the true efficient frontier and the green and red dots are the estimated  non-dominated solutions provided by MOPBnB(wr) and MOPBnB(so), respectively. 
Figures~\ref{fig:zdt122ite11fMO}(b) and (c) for FF, and Figures~\ref{fig:zdt122ite11xMO}(b) and (c) for ZDT1,
show the approximated Pareto set by MOPBnB(wr) and MOPBnB(so), respectively.  Subregions that are pruned are indicated with boxes.  The number inside each box is the  iteration that the box is  pruned.  The red dots are observed points, and the  blue squares are subregions that approximate the Pareto optimal set.

For the FF test function in Figures~\ref{fig:zdt122ite11fMO}(b) and (c),  the true Pareto set is the line segment $x_1=x_2$ for $x_1, x_2 \in [-1,1]$. 
Similarly, the true Pareto set for ZDT1, in Figures~\ref{fig:zdt122ite11xMO}(b) and (c) is given by $x_2=0$ leading to $g(x)=1$.

In Figure \ref{fig:zdt122ite11fMO}(a),  MOPBnB(wr) shows a relatively more complete efficient frontier for the FF test function than MOPBnB(so).  It can also be observed that MOPBnB(wr) appears to have less bias on the efficient frontier, whereas MOPBnB(so) (the red dots in Figure~\ref{fig:zdt122ite11fMO}(a)) shows minor overestimation of the efficient frontier. However, in the same 12 iterations,  the marginal better results from MOPBnB(wr) are based on 8,811,020 function evaluations including replications in contrast to 6,028 function evaluations for MOPBnB(so). 
In Figure~\ref{fig:zdt122ite11xMO}(a), MOPBnB(so) also appears to discover a smaller fraction of the efficient frontier, however, for the ZDT1 test function, the bias shows a mixture of over and under estimation. 
In this instance, there is also a large computational difference, specifically, 17,641,306 function evaluations for MOPBnB(wr), in contrast to 8,805 function evaluations for MOPBnB(so).



Comparing the approximation of the Pareto optimal set in 
Figures~\ref{fig:zdt122ite11fMO}(b) and (c) for FF, and Figures~\ref{fig:zdt122ite11xMO}(b) and (c) for ZDT1, we see that the proportion of the Pareto optimal set approximated by MOPBnB(so) is smaller than MOPBnB(wr).
However, the scale of the difference on computational resources cannot be ignored; MOPBnB(wr) is more costly to achieve a slightly better result. Hence, in the next set of experiments where we run dimensions of $n=2, 5$, and 10, we
only compare MOPBnB(so) with other algorithms.



\begin{figure}
	\centering
	\begin{subfigure}[b]{0.31\textwidth}
		\centering
		\includegraphics[width=\textwidth]{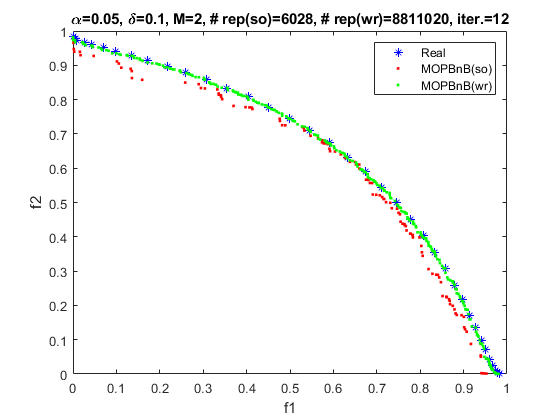}
		\caption{MOPBnB(wr) and (so) \\Efficient frontier}
		\label{fig:a}
	\end{subfigure}
	\hfill
	\begin{subfigure}[b]{0.31\textwidth}
		\centering
		\includegraphics[width=\textwidth]{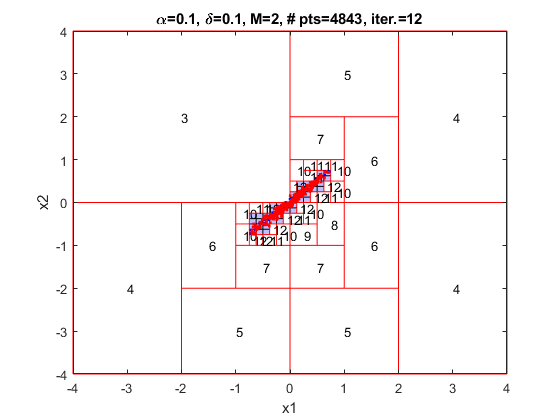}
		\caption{MOPBnB(wr) \\Pareto Optimal Set}
		\label{fig:c}
	\end{subfigure}
	\hfill
	\begin{subfigure}[b]{0.31\textwidth}
		\centering
		\includegraphics[width=\textwidth]{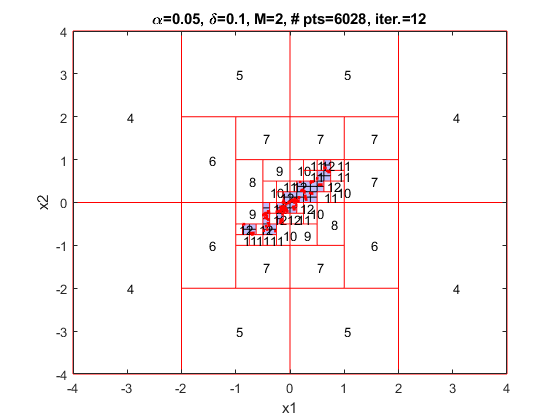}
		\caption{MOPBnB(so) \\Pareto Optimal Set}
		\label{fig:b}
	\end{subfigure}
	\caption{Approximated efficient frontier and Pareto optimal set of FF with $n=2$ using MOPBnB(wr) and MOPBnB(so).}
	\label{fig:zdt122ite11fMO}
\end{figure}

\begin{figure}
	\centering
	\begin{subfigure}[b]{0.31\textwidth}
		\centering
		\includegraphics[width=\textwidth]{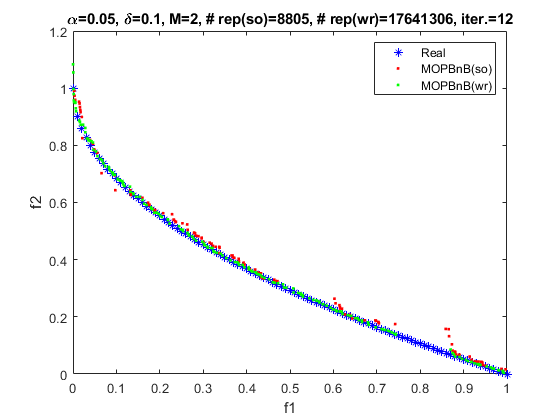}
		\caption{MOPBnB(wr) and (so) \\Efficient frontier}
		\label{fig:a2}
	\end{subfigure}
	\hfill
	\begin{subfigure}[b]{0.31\textwidth}
		\centering
		\includegraphics[width=\textwidth]{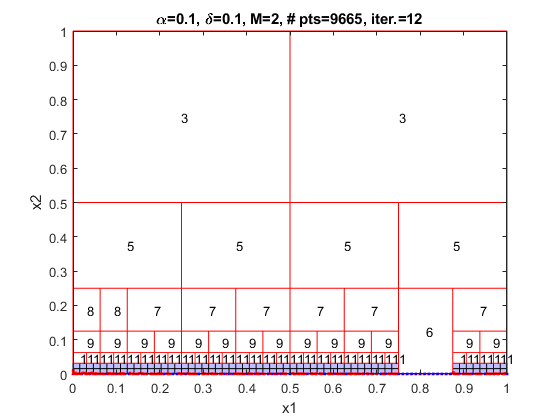}
		\caption{MOPBnB(wr) \\Pareto Optimal Set}
		\label{fig:c2}
	\end{subfigure}
	\hfill
	\begin{subfigure}[b]{0.31\textwidth}
		\centering
		\includegraphics[width=\textwidth]{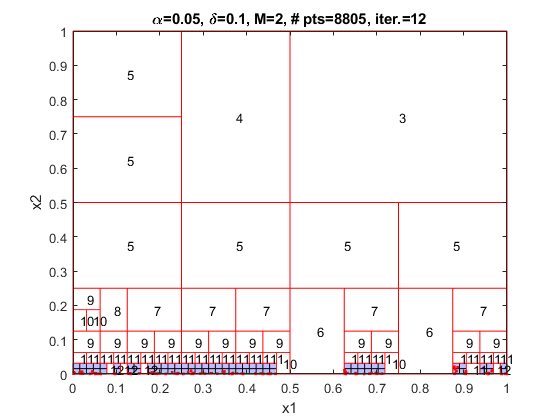}
		\caption{MOPBnB(so) \\Pareto Optimal Set}
		\label{fig:b2}
	\end{subfigure}
	\caption{Approximated efficient frontier and Pareto optimal set of ZDT1 with $n=2$ using MOPBnB(wr) and MOPBnB(so).}
	\label{fig:zdt122ite11xMO}
\end{figure}

For comparison purposes, we execute a uniform random search and the extended NSGA-II, where the function is estimated with a sample average of 20 replications at observed points.  All three algorithms are implemented 50 times, and the performances are averaged over 50 runs.

The numerical comparisons use the three metrics specified in \citeA{Deb01}. 
In Equations (\ref{m1}) through (\ref{m3}), $NS$ is the set of non-dominated points with their function values approximated at each iteration. 
\begin{itemize}
	\item Distance to the true efficient frontier
	\begin{equation}\label{m1}
		M_1(NS)= \frac{1}{\vert NS\vert}\sum_{f'\in NS}\min\{\vert\vert f'-\bar{f}\vert\vert:\bar{f}\in \bar{Y}\},
	\end{equation}	
	where $\bar{Y}$ is the true efficient frontier, approximated through a grid search.
	\item Distribution of the approximated efficient frontier
	\begin{equation}\label{m2}
		M_2(NS)=\frac{1}{\vert NS\vert-1}\sum_{f'\in NS}\left\vert \{q'\in NS:\vert\vert f'-q'\vert\vert>d^*\}\right\vert,
	\end{equation}
	where $d^*$ is a threshold distance to distinguish two solutions, set to $d^*=0.01$ in the experiment.
	\item Extent of the approximated efficient frontier
	\begin{equation}\label{m3}
		M_3(NS)=\sqrt{\max\left\{\vert\vert f'-q'\vert\vert:f',q'\in NS\right\}}.
	\end{equation}
\end{itemize}


Figure \ref{fig:M1zdt123n25} illustrates the comparison between MOPBnB(so), uniform sampling, and NSGA-II using metric $M_1$ on the four test functions, in dimensions $n=2, 5, 10$. 
The line colors distinguish the search algorithms, where blue, green and red are MOPBnB(so), uniform sampling, and NSGA-II, respectively.
The line style indicates the dimensionality $n$ of the test problems.
With the increase of dimensionality of test problems, the trends of the three methods perform in a similar manner.   For the three ZDT test functions, 
MOPBnB(so) decreases the  average distance to  the efficient frontier, $M_1$, the fastest, followed by NSGA-II and uniform sampling.
Different from the ZDT functions, results with FF show MOPBnB(so) has an early lead, but it is taken over by the other two algorithms for $n=2$ and $10$.
After a warm-up period, NSGA-II shows a good performance with respect to $M_1$, but improves slowly which may be the result of a relatively small population size and number of generations.
The  parameter settings for  NSGA-II are taken from default values in MATLAB, with population size set to 50.

\begin{figure}
	\centering
	\includegraphics[width=1\linewidth]{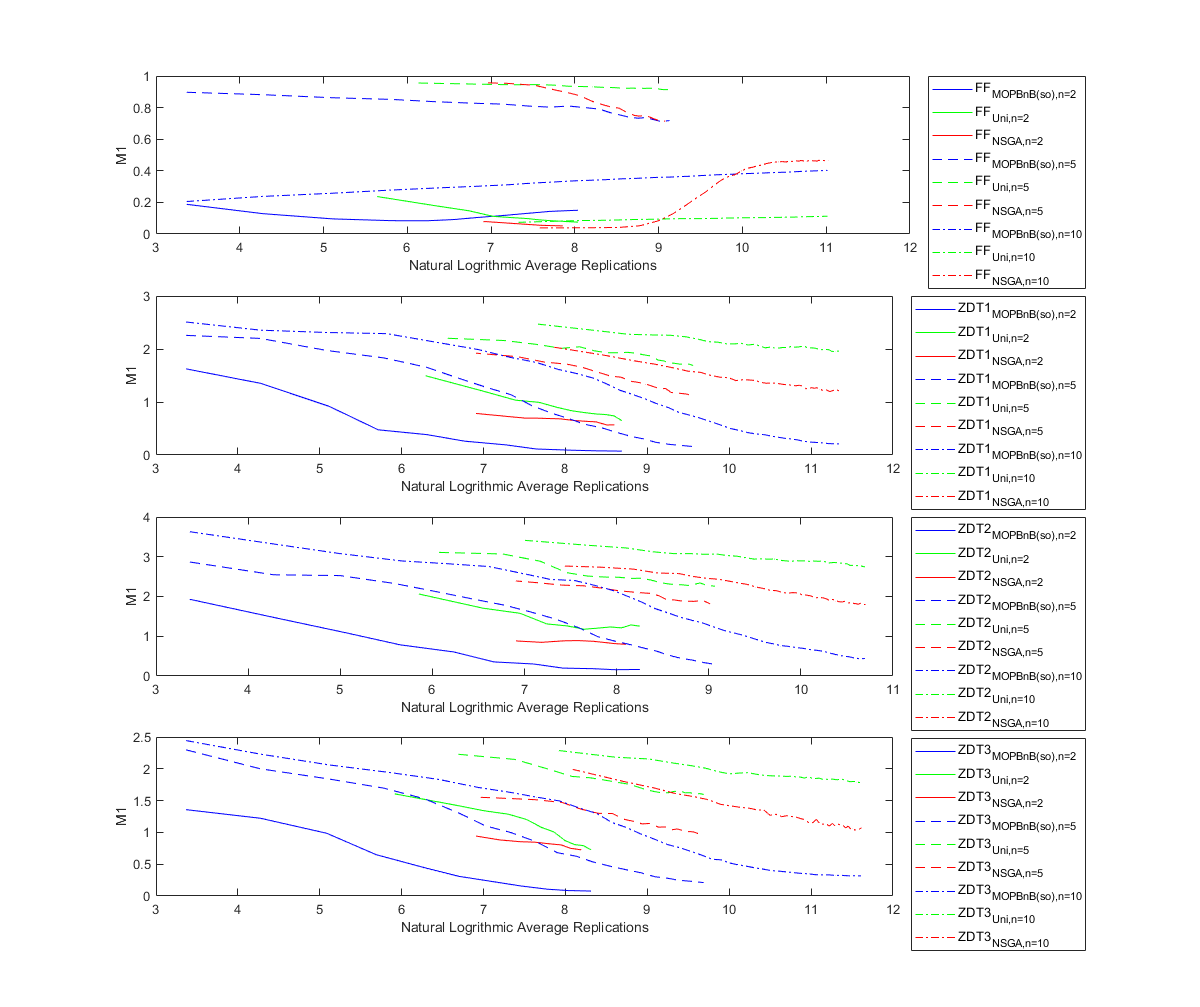}
	\caption[FF, ZDT1, 2, and 3 with n=2, 5, 10]{M1 metric of FF, ZDT1, ZDT2, and ZDT3 with $n=2, 5, 10$ by MOPBnB(so), Uniform Sampling, and NSGA-II.}
	\label{fig:M1zdt123n25}
\end{figure}

Figure \ref{fig:M2zdt123n25} compares MOPBnB(so), uniform sampling, and NSGA-II with the distribution metric $M_2$ on the four test functions with dimensions $n=2, 5, 10$. 
The spread/distribution of the approximated efficient frontier, $M_2$, increases with computational resources.
MOPBnB(so) generally performs better than NSGA-II and uniform sampling, which demonstrates the effectiveness of the single observation technique.
However, for  ZDT2 ($n=5, 10$) and FF ($n=5$), NSGA-II overtakes MOPBnB(so) when the computational resources are large.  
This may be due to the focus MOPBnB(so) puts on subregions with observed non-dominated points, while allocating less computational effort to subregions that are considered for pruning.

\begin{figure}
	\centering
	\includegraphics[width=1\linewidth]{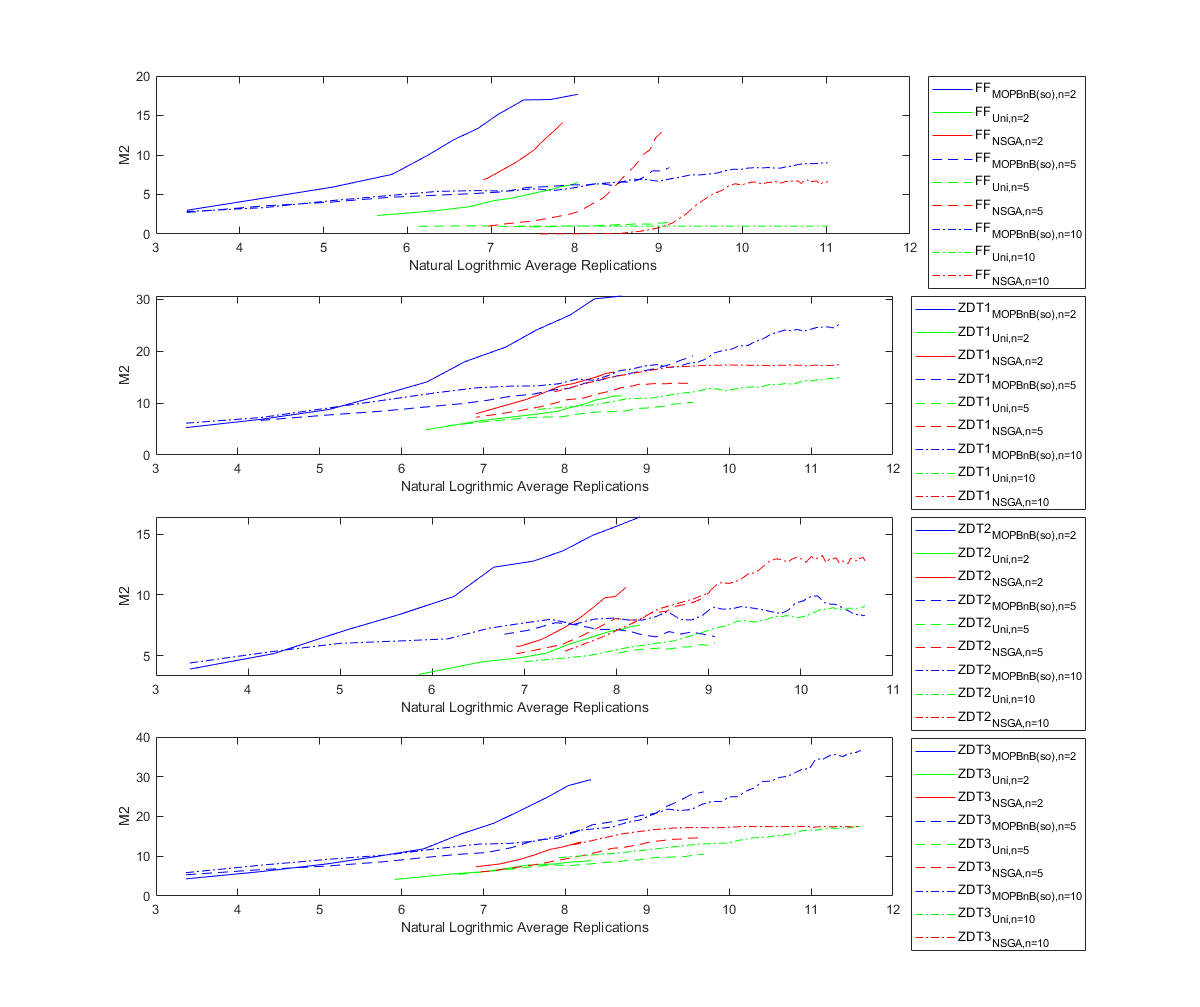}
	\caption[M2 of FF, ZDT1, 2, and 3 with n=2, 5, 10]{M2 metric of FF, ZDT1, ZDT2, and ZDT3 with $n=2, 5, 10$ by MOPBnB(so), Uniform Sampling, and NSGA-II.}
	\label{fig:M2zdt123n25}
\end{figure}

The extent of the approximated efficient frontier, $M_3$, in Figure \ref{fig:M3zdt123n25} shows a different result.
Uniform sampling has the best extent followed by NSGA-II, where MOPBnB(so) loses extent iteratively after the first several iterations, except for FF.  
It is interesting that uniform sampling performs well with respect to this metric, possibly because it has no bias on good regions.  On the other hand, MOPBnB(so) uses its observations to focus the sampling, and parts of  the Pareto optimal set may be pruned, as illustrated  in Figures  \ref{fig:zdt122ite11fMO} and \ref{fig:zdt122ite11xMO}.
In terms of FF, the Pareto optimal set is located  at the center of the solution space and has a relatively short length compared to the width of the solution space.
Therefore, it is relatively difficult for MOPBnB(so) to make incorrect pruning decisions at early iterations.

\begin{figure}
	\centering
	\includegraphics[width=1\linewidth]{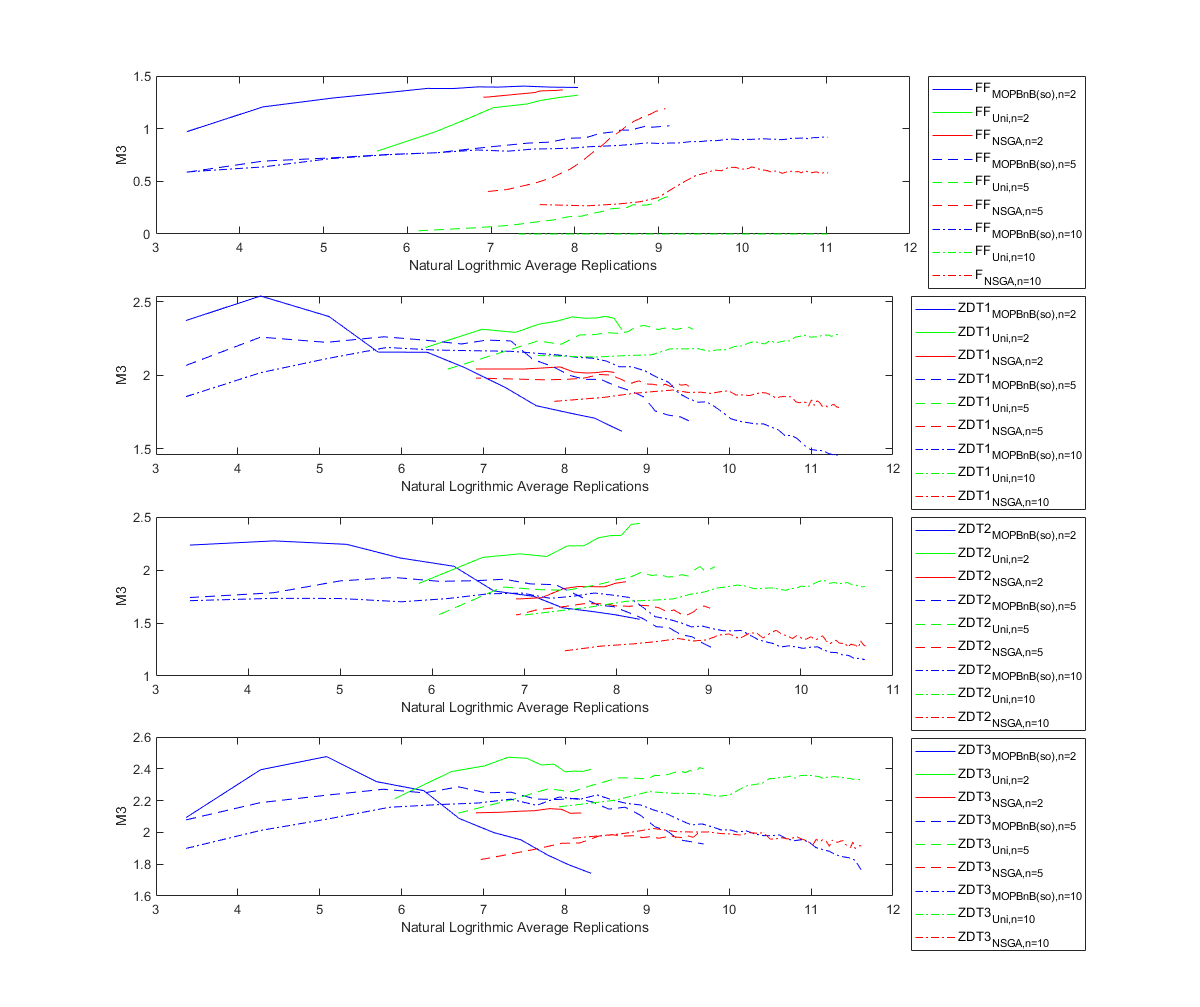}
	\caption[M3 of FF, ZDT1, 2, and 3 with n=2, 5, 10]{M3 metric of FF, ZDT1, ZDT2, and ZDT3 with $n=2, 5, 10$ by MOPBnB(so), Uniform Sampling, and NSGA-II.}
	\label{fig:M3zdt123n25}
\end{figure}

Regardless of the dimensionality of the decision variables, MOPBnB(so) performs better in terms of $M_1$ and $M_2$ under most circumstances, 
however it loses the advantage in terms of $M_3$.
The shape and size of the test functions also affect different performance measures.
In summary, MOPBnB(so) is able to approximate a well spread and close efficient frontier, but some edges of the efficient frontier could be pruned during the process.

\section{Conclusion}\label{sec5}

This study proposes a multi-objective simulation optimization algorithm, MOPBnB(so),
which approximates the Pareto optimal set and its corresponding efficient frontier through iterative branching of the domain space and classifying subregions that are dominated, which leads to focused sampling on subregions thought to be non-dominated.  In contrast to MOPBnB(wr) that uses multiple replications for each sampled solution in order to estimate the objective function value under the stochastic environment, MOPBnB(so)
uses a single observation technique. Hence, MOPBnB(so) can estimate the performance of each sample with only one replication, which significantly reduces the computation. The two variants MOPBnB(so) and MOPBnB(wr) are similar when the objective functions are deterministic, and a
finite-time analysis provides probability bounds to achieve a certain quality of solution.  When the objective functions are estimated with replications, as in MOPBnB(wr), the finite-time analysis still holds because the number of replications are updated iteratively  to statistically ensure the probability bounds are valid. With the performance analysis in  \cite{kiatsupaibul2018single, FiniteSOSA}, MOPBnB(so) can provide asymptotic convergence of the approximated Pareto optimal set to the true Pareto optimal set.
With significantly larger computational resources, MOPBnB(wr) only marginally improves on the performance of MOPBnB(so). 
Furthermore, MOPBnB(so) approximates the Pareto optimal set with non-dominated solutions close to the efficient frontier and well spread out, as compared to NSGA-II and uniform sampling.
However, MOPBnB(so) appears to provide a smaller extent of the approximated efficient frontier as compared to uniform sampling.

It is impossible to implement a simulation optimization problem with infinite computational resources in the real world.
Therefore one of the future directions of this research is to incorporate a finite time analysis on MOPBnB(so), perhaps similar to 
\citeA{al2019revisiting}.

\subsubsection*{Acknowledgements}
This work has been funded in part by the R.O.C.(Taiwan) Ministry of Science and Technology grant MOST 108-2221-E-155-014 and the U.S. National Science Foundation grant CMMI-2204872.

\subsubsection*{Statements and Declarations}
The authors have no financial or non-financial interests to disclose directly or indirectly related to this work. The authors have no competing interests or conflicts of interests for this research.

\clearpage

\bibliography{HFfidelBib3_ZZ}

\section*{Appendix}\label{sec6}

\subsection*{Assumptions for Performance Analysis}

Four assumptions are used in the performance analysis.

The following assumption on the noise of the objective functions is used for Theorem \ref{MOPBnBTh1}.

		\noindent\textbf{Assumption 1.}
Each noisy objective function is normally distributed with an unknown common variance $\sigma^2$, and at each solution $x_j \in S$, the variance can be expressed as $V(x_j) = a_j \sigma^2$ where $a_j$ is a known constant for each $j$, as in \cite{Beck54}.

The following assumptions are necessary for single observation to behave as \citeA{FiniteSOSA} proposed.

\noindent\textbf{Assumption 2.} All objective functions, $f_1(x), \ldots,f_m(x)$, are continuous on the convex part of $S$.

\noindent\textbf{Assumption 3.} The random error $(g_{\ell}(x,U)-f_{\ell}(x))$ is uniformly bounded over $x \in S$; that is, there exists $0 < \alpha < \infty$ such that, for all $x \in S$ with probability one, $\vert g_{\ell}(x, U)-f_{\ell}(x)\vert <\alpha$.


\noindent\textbf{Assumption 4.}
The sequence of radii $r_k$ and sample numbers $n_k$ are such that the number of sample points in the balls grows faster than the radii
shrink, see Assumption 4 in \citeA{FiniteSOSA}.

\subsection*{Proof of Theorem 3}

\begin{proof} 
	Given $\Sigma_k\cup\Sigma^P_k=S$, (\ref{Theo2eq_0}) is equal to 
	\begin{equation}\label{Theo2eq_1}
		P\left(\left. \sigma \in \Sigma^P_k  \text{ } i.o.  \right\vert\sigma \cap S_E\neq \emptyset\right)=0
	\end{equation}
	where $i.o.$ stands for infinitely often.
	Considering the most difficult case that only one point $x$ in the pruned subregion $\sigma$ is a part of the Pareto optima set, 
	(\ref{Theo2eq_1}) can only be violated when $x$ is not sampled and the subregion is pruned incorrectly. 
	
	Let   $x'$ be a sampled point in $\sigma$ that is closest to $x$ in the domain.  By Assumption 2, there exists a finite bound $\rho$ on the differences between objective function values, i.e.,
	%
	\begin{equation}\label{fbound1}
		\vert f_{\ell}(x)-f_{\ell}(x')\vert<\rho, \ 
		\forall \ell. 
	\end{equation}
	Since $\sigma$ has been pruned, there exists a sampled point, call it 
	$x^*$, in the sampled non-dominated set $\mathcal{S}_k$ that dominates $x'$ given the estimated values, that is,
	$\hat{f}_{\ell}(x')\geq\hat{f}_{\ell}(x^*)$ and there exists $j$ such that  $\hat{f}_j(x') > \hat{f}_j(x^*)$. 
	The idea is to show that 
	\begin{align}\label{Theo2eq0}
		&P\left(\left. \hat{f}_{j}(x')-\rho>\hat{f}_{j}(x^*)  \text{ } i.o.\right\vert x'\in\sigma, x^*\in \mathcal{S}_k, \sigma\in\Sigma^P_k \right)=0. 
	\end{align}

	
	%
	%

	According to Lemma 1, as $k\rightarrow\infty$, $\vert \hat{f}_{\ell}(x)-f_{\ell}(x)\vert\rightarrow 0$ for all $x\in S$.
	Given the sampling scheme from MOPBnB(so), each iteration has $kc$ more samples selected from the pruned subregions.
	Therefore, the distance between $x$ and $x'$ decreases as $k$ increases.
	The combination of Lemma \ref{lemma1} for $x'$ and $x^*$ and the closeness in (\ref{fbound1}) can be written as
	\begin{equation}\label{The1eq3}
		P\left\{\lim_{k\rightarrow\infty}\left(\hat{f}_\ell(x')=f_\ell(x')\right)\cap\left(\hat{f}_\ell(x^*)=f_\ell(x^*)\right)\cap\left(\rho=0\right)\right\}=1.
	\end{equation}
	Let $z_0=\min\{z\in \mathbb{Z}^+\colon (1/z) < \vert f_\ell(x') - f_\ell(x^*) \vert+ \rho\}$ be  a bound on the estimation error that could happen. Equation (\ref{The1eq3})  also implies,
	
	\begin{align}\label{The1eq4}
		P&\left\{\bigcup_{z=z_0}^{\infty}\left(\left(\vert \hat{f}_\ell(x')-f_\ell(x')\vert>\frac{1}{3z}\right)\bigcap\left(\vert\hat{f}_\ell(x^*)-f_\ell(x^*)\vert>\frac{1}{3z}\right)\bigcap\left(\rho>\frac{1}{3z}\right)\text{ }i.o.\right)\right.\nonumber\\
		&\hspace{0.5in} \left\vert \phantom{\bigcup_{z=z_0}^{\infty}} \hspace{-0.2in} x'\in\sigma, x^*\in \mathcal{S}_k, \sigma\in\Sigma^P_k  \right\}=0.
	\end{align}
	
	\noindent	For any $z>z_0$, we consider a nesting of the following events,
	\begin{align}\label{The1eq5}
		&\left\{\left.\left(\left(\vert \hat{f}_\ell(x')-f_\ell(x')\vert>\frac{1}{3z}\right)\bigcap\left(\vert\hat{f}_\ell(x^*)-f_\ell(x^*)\vert>\frac{1}{3z}\right)\bigcap\left(\rho>\frac{1}{3z}\right)\text{ }i.o.\right)\right\vert x'\in\sigma, x^*\in \mathcal{S}_k, \sigma\in\Sigma^P_k \right\}\nonumber\\
		&\supseteq \left\{\left.\left(\left(\hat{f}_\ell(x')<f_\ell(x')-\frac{1}{3z}\right)\bigcap\left(\hat{f}_\ell(x^*)>f_\ell(x^*)+\frac{1}{3z}\right)\bigcap\left(\rho>\frac{1}{3z}\right)\text{  }i.o.\right)\right\vert x'\in\sigma, x^*\in \mathcal{S}_k, \sigma\in\Sigma^P_k\right\}\nonumber\\
		&\supseteq \left\{\left.\left(\hat{f}_{\ell}(x') > \hat{f}_{\ell}(x^*)\right)\cap\left(\rho>0\right)\text{ }i.o.  \right\vert x'\in\sigma, x^*\in \mathcal{S}_k, \sigma\in\Sigma^P_k\right\}.
	\end{align}

	Combining (\ref{The1eq4}) and (\ref{The1eq5}) with (\ref{Theo2eq0}), we have
	\begin{align*}\label{The1eq6}
		&P\left(\left. \hat{f}_{\ell}(x')-\rho>\hat{f}_{\ell}(x^*)  \text{ } i.o.\right\vert x'\in\sigma, x^*\in \mathcal{S}_k, \sigma\in\Sigma^P_k \right)\nonumber\\
		&\leq P\left\{\left.\left(\left(\vert \hat{f}_\ell(x')-f_\ell(x')\vert>\frac{1}{3z}\right)\bigcap\left(\vert\hat{f}_\ell(x^*)-f_\ell(x^*)\vert>\frac{1}{3z}\right)\bigcap\left(\rho>\frac{1}{3z}\right)\text{ }i.o.\right)\right\vert x'\in\sigma, x^*\in \mathcal{S}_k, \sigma\in\Sigma^P_k \right\}\nonumber\\
		&= 0.
	\end{align*}
	Therefore, $P\left(\left.\lim_{k\rightarrow\infty} \sigma \in \Sigma_k    \right\vert \sigma \cap S_E\neq \emptyset  \right)=1.$
\end{proof}

\subsection*{Proof of Theorem 4}

\begin{proof}
	In order to prove Equation (\ref{SOMOPBnBTh1eq1}), 
	we show that
	\begin{equation}\label{SOMOPBnBTh1eq2}
		P\left(\left. \sigma\nsubseteq L(\delta,S)\text{ } i.o.\right\vert \sigma \in \Sigma_k, \sigma\cap S_E\neq\emptyset\right)=0.
	\end{equation}
	If $\sigma \nsubseteq  L(\delta, S)$, it means at least one solution $x$ in $\sigma$ is not part of $L(\delta, S)$, 
	hence, 
	\begin{equation*}\label{SOMOPBnBTh1eq2}
		P\left(\left. \sigma\nsubseteq L(\delta,S)\text{ } i.o.\right\vert \sigma \in \Sigma_k, \sigma\cap S_E\neq\emptyset\right)=P\left(\left. 
		\exists x\in\sigma {\rm \ and\ } x \notin L(\delta,S)
		\text{ } i.o.\right\vert \sigma \in \Sigma_k, \sigma\cap S_E\neq\emptyset\right).
	\end{equation*}
	
	According to Remark \ref{cond1}, 
	the set $\left\{x\in S, {\rm \ with\ } \vert f_{\ell}(x)-f_{\ell}(x^*)\vert <\epsilon \right\}$ is a subset of $L(\delta,S)$.
	Therefore, if $x \notin L(\delta,S)$, it is also not in that set, so at least one objective function exceeds the threshold, $\vert f_{\ell}(x)-f_{\ell}(x')\vert\geq\epsilon$, where $x'\in \sigma\cap S_E$. We have,
	
	\begin{align}\label{SOMOPBnBTh1eq3}
		&P\left(\left. 
		\exists x\in\sigma {\rm \ and\ } x \notin L(\delta,S)
		\text{ } i.o.\right\vert \sigma \in \Sigma_k, \sigma\cap S_E\neq\emptyset\right) \nonumber \\
		&= 	P\left(\bigcup_{\ell=1}^m \left(\vert f_{\ell}(x)-f_{\ell}(x')\vert\geq\epsilon\right) \text{ } i.o. \vert\exists x, x' \in \sigma \in\Sigma_{k}, x'\in S_E \right). 
	\end{align}

	Since MOPBnB(so) branches non-pruned subregions iteratively, eventually the subregion will be small enough that we achieve a bound on the objective functions, as in \eqref{fbound1}, where $\rho \leq \epsilon$.  Also, since there exists a Pareto solution $x'\in\sigma$, all solutions within $\sigma$ have objective function differences less than $\epsilon$, that is $\vert f_{\ell}(x)-f_{\ell}(x')\vert<\epsilon$, $\forall\ell$.  Therefore, we have, 
	
	\begin{align}\label{SOMOPBnBTh1eq4}
		&P\left(\bigcup_{\ell=1}^m \left(\vert f_{\ell}(x)-f_{\ell}(x')\vert\geq\epsilon\right) \text{ } i.o. \vert\exists x, x' \in \sigma \in\Sigma_{k}, x'\in S_E \right) = 0, 
	\end{align}
	which indicates that 
	$P\left(\left.\lim_{k\rightarrow\infty} \sigma\subseteq L(\delta,S)\right\vert \sigma \in \Sigma_k, \sigma\cap S_E\neq\emptyset\right)=1.$
\end{proof}

\subsection*{Test Functions}

The algorithms are executed on four test functions:  ZDT1, ZDT2, ZDT3 \cite{Deb01} and the Fonseca and Fleming (FF) function \cite{fonseca1993genetic}.  All three ZDT functions have the same general formulation, with two objective functions and $n$-dimensional, real-valued decision variables $x=(x_1,\ldots,x_n)$,
\begin{align}
	&\min\  f^0_1(x_1), f^0_2(x) \nonumber \\
	&\text{where\ } f^0_2(x)=g(x_2,\ldots,x_n)\ h(f_1^{ 0}(x_1),g(x_2,\ldots,x_n))\nonumber
\end{align}

Specifically, each function is defined as follows.
\begin{itemize}
	\item ZDT1 has a convex efficient frontier:
	\begin{align}
		&f^0_1(x_1)=x_1\nonumber\\
		&g(x_2,\ldots,x_n)=1+9\sum_{i=2}^{n}\frac{x_i}{n-1} \nonumber \\
		&h(f_1^{0}(x_1),g(x_2,\ldots,x_n))=1-\sqrt{f_1^{ 0}(x_1)/g(x_2,\ldots,x_n)},\nonumber
	\end{align}
	
	where 
	$x_i\in[0,1]$, $i=1,\ldots,n$, and $n=2,\ldots,30$. The efficient frontier is formed with $g(x_2,\ldots,x_n)=1$.
	\item ZDT2 has a non-convex efficient frontier:
	\begin{align}
		&f^0_1(x_1)=x_1\nonumber\\
		&g(x_2,\ldots,x_n)=1+9\sum_{i=2}^{n}\frac{x_i}{n-1} \nonumber \\
		&h(f_1^{0}(x_1),g(x_2,\ldots,x_n))=1-\left( f_1^{0}(x_1)/g(x_2,\ldots,x_n) \right)^2,\nonumber
	\end{align}
	where $x_i\in[0,1]$, $i=1,\ldots,n$, and $n=2,\ldots,30$. The efficient frontier is formed with $g(x_2,\ldots,x_n)=1$.
	\item ZDT3 has a  discontinuous efficient frontier consisting of several convex parts:
	\begin{align}
		&f^0_1(x_1)=x_1\nonumber\\
		&g(x_2,\ldots,x_n)=1+9\sum_{i=2}^{n}\frac{x_i}{n-1} \nonumber \\
		&h(f_1^{0}(x_1),g(x_2,\ldots,x_n))=1-\sqrt{f_1^{0}(x_1)/g(x_2,\ldots,x_n)} \nonumber \\
		& \qquad \qquad \qquad  \qquad \qquad \ \ \  -(f_1^{0}(x_1)/g(x_2,\ldots,x_n))\sin(10\pi f_1)\nonumber
	\end{align}
	where $x_i\in[0,1]$, $i=1,\ldots,n$, and $n=2,\ldots,30$. The efficient frontier is formed with $g(x_2,\ldots,x_n)=1$.
	\item Fonseca and Fleming (FF) function has a non-convex efficient frontier:
	\begin{align}\label{func4}
		&\min\  f^0_1(x), f^0_2(x) \nonumber \\
		&\text{where\ }f^0_1(x)= 1- \exp\left(-\sum_{i=1}^n \left(x_i-\frac{1}{\sqrt{n}}\right)^2\right)  \nonumber \\
		&\text{and\ } f^0_2(x)= 1- \exp\left(-\sum_{i=1}^n \left(x_i+\frac{1}{\sqrt{n}}\right)^2\right)\nonumber
	\end{align} 
	where  $-4\leq x_i \leq 4, i=1,\ldots,n$ and $n=2,\ldots,30$.
\end{itemize}

\end{document}